\newtheorem{theorem}{Theorem}
\newtheorem{lemma}{Lemma}
\newcommand{\N}{\mathbb N}
\newcommand{\Z}{\mathbb Z}
\newcommand{\K}{\textbf{\textit{K}}}
\begin{document}
\par
\noindent
\centerline{{\Large Small systems of Diophantine equations which have}}
\vskip 0.2truecm
\par
\noindent
\centerline{{\Large only very large integer solutions}}
\vskip 1.1truecm
\par
\noindent
\centerline{{\Large Apoloniusz Tyszka}}
\vskip 1.1truecm
\noindent
{\bf Abstract.} Let \mbox{$E_n=\{x_i=1,~x_i+x_j=x_k,~x_i \cdot x_j=x_k: i,j,k \in \{1,\ldots,n\}\}$}.
There is an algorithm that for every computable function
\mbox{$f:\N \to \N$} returns a positive integer $m(f)$, for which a second algorithm accepts
on the input $f$ and any integer \mbox{$n \geq m(f)$}, and returns a system \mbox{$S \subseteq E_n$} such that
$S$ has infinitely many integer solutions and each integer tuple $(x_1,\ldots,x_n)$ that solves $S$ satisfies $x_1=f(n)$.
For each integer \mbox{$n \geq 12$} we construct a system \mbox{$S \subseteq E_n$}
such that~$S$ has infinitely many integer solutions and they all belong to
\mbox{${\Z}^n \setminus [-2^{\textstyle 2^{n-1}},2^{\textstyle 2^{n-1}}]^n$}.
\vskip 1.1truecm
\par
\noindent
{\bf Key words and phrases:} computable function,
computable upper bound for the heights of integer (rational) solutions of a Diophantine equation,
Davis-Putnam-Robinson-Matiyasevich theorem,
Diophantine equation with a finite number of integer (rational) solutions, system of Diophantine equations.
\vskip 1.3truecm
\par
\noindent
{\bf 2010 Mathematics Subject Classification:} 03D20, 11D99, 11U99.
\vskip 1.1truecm
\par
We present a general method for constructing small systems
of Diophantine equations which have only very large integer solutions. Let $\Phi_n$ denote the following statement
\[
\forall x_1,\ldots,x_n \in \Z ~\exists y_1,\ldots,y_n \in \Z
\]
\[
\Bigl(2^{\textstyle 2^{n-1}}<|x_1| \Longrightarrow \bigl(|x_1|<|y_1| \vee \ldots \vee |x_1|<|y_n|\bigr)\Bigr) ~\wedge
\]
\begin{equation}
\Bigl(\forall i,j,k \in \{1,\ldots,n\}~(x_i+x_j=x_k \Longrightarrow y_i+y_j=y_k)\Bigr) ~\wedge
\end{equation}
\begin{equation}
\forall i,j,k \in \{1,\ldots,n\}~(x_i \cdot x_j=x_k \Longrightarrow y_i \cdot y_j=y_k)
\end{equation}
\par
For $n \geq 2$, the bound $2^{\textstyle 2^{n-1}}$ cannot be decreased because for
\[
(x_1,\ldots,x_n)=\Bigl(2^{\textstyle 2^{n-1}},2^{\textstyle 2^{n-2}},2^{\textstyle 2^{n-3}},\ldots,256,16,4,2\Bigr)
\]
the conjunction of statements (1) and (2) guarantees that
\[
(y_1,\ldots,y_n)=(0,\ldots,0) \vee (y_1,\ldots,y_n)=\Bigl(2^{\textstyle 2^{n-1}},2^{\textstyle 2^{n-2}},2^{\textstyle 2^{n-3}},\ldots,256,16,4,2\Bigr)
\]
\par
The statement $\forall n \Phi_n$ has powerful consequences for Diophantine equations, but is still unproven, see \cite{Tyszka}.
In particular, it implies that if a Diophantine equation has only finitely many solutions
in integers (non-negative integers, rationals), then their heights are bounded from above
by a computable function of the degree and the coefficients of the equation.
For integer solutions, this conjectural upper bound can be computed by applying equation~(3) and Lemmas~\ref{lem2} and~\ref{lem7}.
\vskip 0.2truecm
\begin{sloppypar}
\noindent
{\bf Observation.} {\em For all positive integers $n$, $m$ with \mbox{$n \leq m$},
if the statement $\Phi_n$ fails for \mbox{$(x_1,\ldots,x_n) \in {\Z}^n$}
and \mbox{$2^{\textstyle 2^{m-1}}<|x_1| \leq 2^{\textstyle 2^m}$}, then the statement $\Phi_m$ fails
for \mbox{$(\underbrace{x_1,\ldots,x_1}_{m-n+1 {\rm ~times}},x_2,\ldots,x_n) \in {\Z}^m$}.}
\end{sloppypar}
\vskip 0.2truecm
\par
By the Observation, the statement $\forall n \Phi_n$ is equivalent to the statement $\forall n \Psi_n$,
where $\Psi_n$ denote the statement
\[
\forall x_1,\ldots,x_n \in \Z ~\exists y_1,\ldots,y_n \in \Z
\]
\[
\Bigl(2^{\textstyle 2^{n-1}}<|x_1|={\rm max}\bigl(|x_1|,\ldots,|x_n|\bigr) \leq 2^{\textstyle 2^n} \Longrightarrow \bigl(|x_1|<|y_1| \vee \ldots \vee |x_1|<|y_n|\bigr)\Bigr) ~\wedge
\]
\[
\Bigl(\forall i,j,k \in \{1,\ldots,n\}~(x_i+x_j=x_k \Longrightarrow y_i+y_j=y_k)\Bigr) ~\wedge
\]
\[
\forall i,j,k \in \{1,\ldots,n\}~(x_i \cdot x_j=x_k \Longrightarrow y_i \cdot y_j=y_k)
\]
In contradistinction to the statements $\Phi_n$, each true statement $\Psi_n$
can be confirmed by a brute-force search in a finite amount of time.
\vskip 0.2truecm
\par
The statement
\[
\forall n ~\forall x_1,\ldots,x_n \in \Z ~\exists y_1,\ldots,y_n \in \Z
\]
\[
\bigl(2^{\textstyle 2^{n-1}}<|x_1| \Longrightarrow |x_1|<|y_1|\bigr) ~\wedge
\]
\[
\bigl(\forall i,j,k \in \{1,\ldots,n\}~(x_i+x_j=x_k \Longrightarrow y_i+y_j=y_k)\bigr) ~\wedge
\]
\[
\forall i,j,k \in \{1,\ldots,n\}~(x_i \cdot x_j=x_k \Longrightarrow y_i \cdot y_j=y_k)
\]
strengthens the statement $\forall n \Phi_n$ but is false, as we will show in the Corollary.
\vskip 0.2truecm
\par
Let
\[
E_n=\{x_i=1,~x_i+x_j=x_k,~x_i \cdot x_j=x_k: i,j,k \in \{1,\ldots,n\}\}
\]
\newpage
To each system $S \subseteq E_n$ we assign the system $\widetilde{S}$ defined by
\vskip 0.2truecm
\par
\noindent
\centerline{$\left(S \setminus \{x_i=1:~i \in \{1,\ldots,n\}\}\right) \cup$}
\par
\noindent
\centerline{$\{x_i \cdot x_j=x_j:~i,j \in \{1,\ldots,n\} {\rm ~and~the~equation~} x_i=1 {\rm ~belongs~to~} S\}$}
\vskip 0.2truecm
\par
\noindent
In other words, in order to obtain $\widetilde{S}$ we remove from $S$ each
equation $x_i=1$ and replace it by the following $n$ equations:
\vskip 0.2truecm
\par
\noindent
\centerline{$\begin{array}{rcl}
x_i \cdot x_1 &=& x_1\\
&\ldots& \\
x_i \cdot x_n &=& x_n
\end{array}$}
\vskip 0.2truecm
\par
\noindent
\begin{lemma}\label{lem1}
For each system $S \subseteq E_n$
\begin{eqnarray*}
\{(x_1,\ldots,x_n) \in {\Z}^n:~(x_1,\ldots,x_n) {\rm ~solves~} \widetilde{S}\} &=& \\
\{(x_1,\ldots,x_n) \in {\Z}^n:~(x_1,\ldots,x_n) {\rm ~solves~} S\} \cup
\{(0,\ldots,0)\}&
\end{eqnarray*}
\end{lemma}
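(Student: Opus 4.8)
The plan is to reduce everything to one elementary observation: the systems $S$ and $\widetilde S$ contain exactly the same equations of the forms $x_i+x_j=x_k$ and $x_i\cdot x_j=x_k$, and the only difference is that each equation $x_i=1$ occurring in $S$ --- say for $i$ ranging over some index set $I\subseteq\{1,\ldots,n\}$ --- is replaced in $\widetilde S$ by the $n$ equations $x_i\cdot x_1=x_1,\ \ldots,\ x_i\cdot x_n=x_n$. Call the equations common to both systems the \emph{shared equations}. I would then prove the two inclusions separately.

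For $\supseteq$, I handle the two parts of the right-hand side. If $(x_1,\ldots,x_n)$ solves $S$, then $x_i=1$ for every $i\in I$, hence $x_i\cdot x_j=1\cdot x_j=x_j$ for all $j\in\{1,\ldots,n\}$; together with the shared equations this shows the tuple solves $\widetilde S$. If $(x_1,\ldots,x_n)=(0,\ldots,0)$, then every equation appearing in $\widetilde S$ has one of the forms $x_i+x_j=x_k$, $x_i\cdot x_j=x_k$, $x_i\cdot x_j=x_j$, and each is satisfied because $0+0=0$ and $0\cdot 0=0$; so $(0,\ldots,0)$ solves $\widetilde S$.

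For $\subseteq$, suppose $(x_1,\ldots,x_n)\in\Z^n$ solves $\widetilde S$. The shared equations are automatically satisfied, so it is enough to show that either $(x_1,\ldots,x_n)=(0,\ldots,0)$ or $x_i=1$ for every $i\in I$, since in the latter case the tuple then also satisfies all the equations $x_i=1$ of $S$ and hence solves $S$. Assume $(x_1,\ldots,x_n)\neq(0,\ldots,0)$ and fix an index $j_0$ with $x_{j_0}\neq 0$. For each $i\in I$ the equation $x_i\cdot x_{j_0}=x_{j_0}$ belongs to $\widetilde S$; cancelling the nonzero integer $x_{j_0}$, which is legitimate in the integral domain $\Z$, yields $x_i=1$. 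This completes the inclusion.

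I do not expect a genuine obstacle. The only point deserving care is the bookkeeping of the extra point $(0,\ldots,0)$: when $I=\emptyset$ we have $\widetilde S=S$ and $(0,\ldots,0)$ already solves $S$, whereas when $I\neq\emptyset$ the zero tuple solves $\widetilde S$ but not $S$ (since $0\neq 1$) --- and this is precisely why $\{(0,\ldots,0)\}$ must be adjoined on the right-hand side in all cases.
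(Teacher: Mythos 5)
Your proof is correct; the paper states Lemma~\ref{lem1} without any proof, treating it as immediate, and your argument --- checking both inclusions, with the key step that a nonzero coordinate $x_{j_0}$ can be cancelled in the integral domain $\Z$ to recover $x_i=1$ --- is exactly the intended justification. The bookkeeping about the zero tuple (including the case where $S$ contains no equation $x_i=1$) is also handled correctly.
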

\begin{lemma}\label{lem2}
The statement $\Phi_n$ can be equivalently stated thus: if a \mbox{system}
\mbox{$S \subseteq E_n$} has only finitely many solutions in integers \mbox{$x_1,\ldots,x_n$}, then each such
\mbox{solution} \mbox{$(x_1,\ldots,x_n)$} satisfies \mbox{$|x_1|,\ldots,|x_n| \leq 2^{\textstyle 2^{n-1}}$}.
\end{lemma}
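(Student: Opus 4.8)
Since the assertion is an equivalence, the plan is to establish the two implications separately; essentially all the work is in deducing the displayed boundedness property from $\Phi_n$, and for that I would prove the contrapositive. So suppose $S\subseteq E_n$ has a solution $(x_1,\ldots,x_n)\in\Z^n$ with $|x_i|>2^{2^{n-1}}$ for some $i$; the goal is then to produce infinitely many solutions of $S$. My first move would be to record that $E_n$ is stable under renaming variables: for a permutation $\sigma$ of $\{1,\ldots,n\}$, substituting $x_{\sigma(t)}$ for each $x_t$ sends $E_n$ to $E_n$ and carries $S$ to a system $S^{\sigma}\subseteq E_n$ whose solutions are exactly the $\sigma$-permutations of those of $S$; such a renaming, undone later, costs nothing. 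Hence, when producing a new solution, I may assume the one I start from has maximal absolute value at its first coordinate.

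The crux will be the operation $S\mapsto\widetilde S$ together with Lemma~\ref{lem1}, which is needed precisely because $\Phi_n$ propagates only the additive and multiplicative equations and says nothing about equations $x_i=1$. Concretely, given a solution $(x_1,\ldots,x_n)$ of $S$ with $|x_1|=\max_t|x_t|>2^{2^{n-1}}$, I would pass to $\widetilde S$, which contains no equation of the form $x_i=1$; by Lemma~\ref{lem1} the tuple $(x_1,\ldots,x_n)$ still solves $\widetilde S$. Applying $\Phi_n$ to $(x_1,\ldots,x_n)$, which is legitimate because $|x_1|>2^{2^{n-1}}$, yields $(y_1,\ldots,y_n)\in\Z^n$ that satisfies every additive and every multiplicative equation satisfied by $(x_1,\ldots,x_n)$ --- hence solves $\widetilde S$ --- and satisfies $|x_1|<|y_l|$ for some $l$. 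Then $\max_t|y_t|\ge|y_l|>|x_1|>0$, so $(y_1,\ldots,y_n)\ne(0,\ldots,0)$, and Lemma~\ref{lem1} upgrades $(y_1,\ldots,y_n)$ to a genuine solution of $S$ with $\max_t|y_t|>\max_t|x_t|$. Starting from the hypothesised large solution and iterating this step --- reindexing before each application so that a maximal coordinate comes first, and reindexing back afterwards --- would yield solutions of $S$ whose maximal absolute values strictly increase, hence infinitely many distinct solutions of $S$. I expect the only genuine subtlety to be the bookkeeping: tracking the distinguished variable under the reindexings and making sure each step lands on a nonzero solution of $S$ rather than merely of $\widetilde S$; with Lemma~\ref{lem1} in hand this is routine.

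For the converse I would assume the boundedness property for every $S\subseteq E_n$ and verify $\Phi_n$ directly. Fix $(x_1,\ldots,x_n)\in\Z^n$. If $|x_1|\le 2^{2^{n-1}}$, the first conjunct of $\Phi_n$ is vacuous and $(y_1,\ldots,y_n)=(x_1,\ldots,x_n)$ works, trivially respecting all additive and multiplicative implications. If $|x_1|>2^{2^{n-1}}$, let $S$ be the set of all members of $E_n$ satisfied by $(x_1,\ldots,x_n)$; then $(x_1,\ldots,x_n)$ solves $S$ and its first coordinate exceeds $2^{2^{n-1}}$, so by the assumed property $S$ cannot have only finitely many solutions. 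Since only finitely many integer tuples have all coordinates of absolute value at most $|x_1|$, there is a solution $(y_1,\ldots,y_n)$ of $S$ with $\max_t|y_t|>|x_1|$, that is, $|x_1|<|y_l|$ for some $l$; and because $S$ contains every additive and multiplicative equation satisfied by $(x_1,\ldots,x_n)$, this $(y_1,\ldots,y_n)$ satisfies all the implications required by $\Phi_n$. No new obstacle arises here, and the operation $S\mapsto\widetilde S$ is not needed for this direction.
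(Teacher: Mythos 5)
Your proof is correct and is essentially the paper's argument: the paper's proof is the single line ``It follows from Lemma~\ref{lem1}'', and your write-up is exactly that route, using the passage $S\mapsto\widetilde S$ and Lemma~\ref{lem1} to reconcile the equations $x_i=1$ (which $\Phi_n$ does not propagate) with the additive and multiplicative ones, plus the routine details the paper leaves implicit (permutation symmetry of $E_n$, iteration to get infinitely many solutions, and the easy converse).
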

\begin{proof}
It follows from Lemma~\ref{lem1}.
\end{proof}
\par
Nevertheless, for each integer \mbox{$n \geq 12$} there exists a system \mbox{$S \subseteq E_n$} which has infinitely many
integer solutions and they all belong to \mbox{${\Z}^n \setminus [-2^{\textstyle 2^{n-1}},2^{\textstyle 2^{n-1}}]^n$}.
We will prove it in Theorem~\ref{the1}. First we need a few lemmas.
\begin{lemma}\label{lem3}
If a positive integer $n$ is odd and a pair $(x,y)$ of positive integers solves the negative Pell equation \mbox{$x^2-dy^2=-1$},
then the pair
\[
\left(\frac{\left(x+y\sqrt{d}\right)^n+\left(x-y\sqrt{d}\right)^n}{2},~
\frac{\left(x+y\sqrt{d}\right)^n-\left(x-y\sqrt{d}\right)^n}{2\sqrt{d}}\right)
\]
consists of positive integers and solves the equation \mbox{$x^2-dy^2=-1$}.
\end{lemma}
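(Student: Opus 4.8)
The plan is to pass to $\Z[\sqrt{d}]$. Since the negative Pell equation $x^2-dy^2=-1$ has a solution in positive integers, $d$ is a positive non-square (in fact $d\geq 2$), so $\sqrt{d}$ is irrational. Put $\alpha=x+y\sqrt{d}$ and $\beta=x-y\sqrt{d}$, so that $\alpha+\beta=2x$ and $\alpha\beta=x^2-dy^2=-1$; writing $X_n$ and $Y_n$ for the two coordinates appearing in the statement, we have $\alpha^n=X_n+Y_n\sqrt{d}$ and $\beta^n=X_n-Y_n\sqrt{d}$ because $\alpha$ and $\beta$ are conjugate over $\mathbb{Q}$. The Pell identity is then immediate: $X_n^2-dY_n^2=(X_n+Y_n\sqrt{d})(X_n-Y_n\sqrt{d})=\alpha^n\beta^n=(\alpha\beta)^n=(-1)^n$, which equals $-1$ precisely because $n$ is odd. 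This is the only place where the parity hypothesis enters.

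For integrality I would set $P_n=\alpha^n+\beta^n=2X_n$ and $R_n=(\alpha^n-\beta^n)/(2y\sqrt{d})$, so that $Y_n=yR_n$. Since $\alpha$ and $\beta$ are the two roots of $t^2-2xt-1=0$, both sequences obey the linear recurrence $u_{m+2}=2x\,u_{m+1}+u_m$. From $P_0=2$ and $P_1=2x$ an induction gives $P_n\in 2\Z$, hence $X_n=P_n/2\in\Z$; from $R_0=0$ and $R_1=1$ an induction gives $R_n\in\Z$, hence $Y_n=yR_n\in\Z$.

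For positivity, note that $x^2-dy^2=-1$ with $x,y\geq 1$ forces $0<x<y\sqrt{d}$, hence $\alpha=x+y\sqrt{d}>y\sqrt{d}-x=|\beta|>0$; combined with $\alpha\,|\beta|=|\alpha\beta|=1$ this gives $\alpha>1$ and $\beta=-1/\alpha\in(-1,0)$. Therefore, for odd $n\geq 1$, $\alpha^n>1$ and $\beta^n\in(-1,0)$, so $2X_n=\alpha^n+\beta^n>0$ and $2y\sqrt{d}\,R_n=\alpha^n-\beta^n>0$; together with the integrality just established, $X_n\geq 1$ and $Y_n=yR_n\geq 1$, which is what is claimed.

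No step here is genuinely deep; the part deserving the most care is the parity bookkeeping that makes $X_n=P_n/2$ an integer, together with the observation $\beta=-1/\alpha$, which confines $\beta$ to $(-1,0)$ and is exactly what makes the signs come out right for odd exponents. Alternatively, positivity can be read off by running the same recurrence forward from $X_1=x\geq 1$ and $Y_1=y\geq 1$.
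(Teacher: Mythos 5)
Your proof is correct, and it is the standard argument: the Pell identity follows from $X_n^2-dY_n^2=(\alpha\beta)^n=(-1)^n$ with $\alpha\beta=-1$, integrality from the recurrence $u_{m+2}=2x\,u_{m+1}+u_m$ satisfied by $\alpha^m+\beta^m$ and $(\alpha^m-\beta^m)/(\alpha-\beta)$, and positivity from $\alpha>1$ and $\beta=-1/\alpha\in(-1,0)$. Note that the paper states Lemma~\ref{lem3} without any proof (it is invoked in Theorem~\ref{the1} alongside a citation to standard Pell-equation theory), so there is no authorial argument to compare against; your write-up supplies exactly the routine verification the author left implicit, with the parity of $n$ used only in $(-1)^n=-1$ and in keeping the signs right for positivity.
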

\begin{lemma}\label{lem4} \mbox{(\cite[pp.~201--202,~Theorem~106]{Nagell})} In the domain of positive integers,
all solutions to \mbox{$x^2-5y^2=-1$} are given by
\[
\left(2+\sqrt{5}\right)^{2k+1}=x+y\sqrt{5}
\]
where $k$ is a non-negative integer.
\end{lemma}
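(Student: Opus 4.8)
\vskip 0.2truecm
\par
\noindent
{\bf Proof proposal.} Lemma~\ref{lem4} is the classical parametrization of the positive solutions of a negative Pell equation, so the plan is to prove the two implications separately: the easy one by invoking Lemma~\ref{lem3}, and the substantive one by a descent in the ring $\Z[\sqrt{5}]$.

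First, that every pair $(x,y)$ of integers with $x+y\sqrt{5}=(2+\sqrt{5})^{2k+1}$ is a positive solution of $x^2-5y^2=-1$: I would apply Lemma~\ref{lem3} with $d=5$, with the base solution $(2,1)$ (note $2^2-5\cdot 1^2=-1$), and with the odd positive exponent $n=2k+1$; the pair produced there is precisely the pair of integer coordinates of $(2+\sqrt{5})^{2k+1}$.

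Conversely, that every positive integer solution arises this way: write $N(u+v\sqrt{5})=(u+v\sqrt{5})(u-v\sqrt{5})=u^2-5v^2$, and note that conjugation $u+v\sqrt{5}\mapsto u-v\sqrt{5}$ is a ring automorphism of $\Z[\sqrt{5}]$, so $N$ is multiplicative. Put $\epsilon=2+\sqrt{5}$, so $N(\epsilon)=-1$ and $\epsilon^{-1}=\sqrt{5}-2\in\Z[\sqrt{5}]$. A trivial check shows $\epsilon$ is the smallest $x+y\sqrt{5}$ with $x,y$ positive integers and $x^2-5y^2=-1$ (if $y=1$ then $x=2$; if $y\ge 2$ then $x+y\sqrt{5}>2\sqrt{5}>\epsilon$). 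Given an arbitrary positive integer solution $\alpha=x+y\sqrt{5}$, we have $\alpha\ge\epsilon>1$, so there is a unique integer $m\ge 1$ with $\epsilon^{m}\le\alpha<\epsilon^{m+1}$. Then $\gamma:=\alpha\epsilon^{-m}\in\Z[\sqrt{5}]$ satisfies $1\le\gamma<\epsilon$ and $N(\gamma)=N(\alpha)N(\epsilon)^{-m}=(-1)^{m+1}$. Writing $\gamma=c+d\sqrt{5}$ and using $1\le\gamma<2+\sqrt{5}$ together with $|\overline{\gamma}|=|N(\gamma)|/\gamma=1/\gamma\le 1$, one confines $c=(\gamma+\overline{\gamma})/2$ and $d=(\gamma-\overline{\gamma})/(2\sqrt{5})$ to finitely many values; substituting into $c^2-5d^2=(-1)^{m+1}$ leaves only $\gamma=1$ (of norm $1$) or $\gamma=2+\sqrt{5}=\epsilon$ (of norm $-1$), and the latter is ruled out by $\gamma<\epsilon$. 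Hence $\gamma=1$, i.e.\ $\alpha=\epsilon^{m}$, and $N(\alpha)=-1$ forces $m$ odd, say $m=2k+1$ with $k\ge 0$.

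The step that needs real care is the coordinate estimate for $\gamma$ — effectively the statement that $\epsilon$ generates, up to sign, the full unit group of $\Z[\sqrt{5}]$, together with the bookkeeping of which power has which norm. The other ingredients (the minimality of $\epsilon$, and the base case of Lemma~\ref{lem3}) are routine finite verifications, so I expect that single case analysis to be the main obstacle.
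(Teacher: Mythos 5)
Your argument is correct, but note that the paper does not prove Lemma~\ref{lem4} at all: it is quoted as a known result, with a reference to Nagell \cite[pp.~201--202, Theorem~106]{Nagell}, so there is no internal proof to compare against. What you supply is essentially the classical proof that underlies Nagell's theorem: the forward direction via Lemma~\ref{lem3} with $d=5$, $n=2k+1$ and base solution $(2,1)$, and the converse by descent in $\Z[\sqrt{5}]$ using the norm $N(u+v\sqrt{5})=u^2-5v^2$ and the unit $\epsilon=2+\sqrt{5}$ of norm $-1$. I checked the step you flagged as delicate: with $1\le\gamma<\epsilon$ and $|\overline{\gamma}|=1/\gamma\le 1$ one gets $0\le c\le 2$ and $0\le d\le 1$, and among the six pairs $(c,d)$ only $(1,0)$ (norm $1$) and $(2,1)$ (norm $-1$) satisfy $c^2-5d^2=\pm 1$; the latter is excluded by $\gamma<\epsilon$, so $\gamma=1$, $\alpha=\epsilon^m$, and $N(\alpha)=-1$ forces $m=2k+1$. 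The minimality of $\epsilon$ among positive solutions and the parity bookkeeping are also handled correctly. So the difference is one of economy versus self-containedness: the paper's citation buys brevity and leans on a standard reference, while your proof makes the lemma independent of \cite{Nagell} at the cost of about a page of routine unit-group descent; either is acceptable, and your version would in fact let the paper drop the external dependence for this lemma.
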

\begin{lemma}\label{lem5}
The pair $(2,1)$ solves the equation $x^2-5y^2=-1$. If a pair \mbox{$(x,y)$} solves the
equation \mbox{$x^2-5y^2=-1$}, then the pair \mbox{$(9x+20y,~4x+9y)$} solves this equation too.
\end{lemma}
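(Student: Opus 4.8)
The plan is to dispatch both assertions by elementary algebra. For the first, one simply checks that $2^2 - 5\cdot 1^2 = 4 - 5 = -1$, so $(2,1)$ indeed solves $x^2 - 5y^2 = -1$. For the second, the whole content lies in the polynomial identity
\[
(9x+20y)^2 - 5(4x+9y)^2 = x^2 - 5y^2,
\]
which I would establish by expanding: $(9x+20y)^2 = 81x^2 + 360xy + 400y^2$ and $5(4x+9y)^2 = 80x^2 + 360xy + 405y^2$, and subtracting yields $x^2 - 5y^2$. Consequently, if $(x,y)$ satisfies $x^2 - 5y^2 = -1$, then $(9x+20y,\,4x+9y)$ satisfies $(9x+20y)^2 - 5(4x+9y)^2 = -1$ as well, which is exactly the claim.

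Conceptually, this identity is the norm form of the observation that $9 + 4\sqrt5 = (2+\sqrt5)^2$ has norm $9^2 - 5\cdot 4^2 = 1$ in $\Z[\sqrt5]$, and that $(x + y\sqrt5)(9 + 4\sqrt5) = (9x+20y) + (4x+9y)\sqrt5$; multiplication by a norm-one element preserves the norm $-1$. I would mention this viewpoint because it ties the lemma to Lemma~\ref{lem4}: iterating the map $(x,y)\mapsto(9x+20y,\,4x+9y)$ from $(2,1)$ produces precisely the solutions encoded by $(2+\sqrt5)^{2k+1} = x + y\sqrt5$, which is what the later lemmas will exploit.

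I do not expect any genuine obstacle here; the statement as phrased is a one-line substitution together with one routine algebraic identity, and the only point requiring care is getting the cross-term coefficients right in the expansion. The stronger fact that these are \emph{all} positive solutions is not asserted by this lemma and would instead require Lemma~\ref{lem4}, so it need not enter the proof.
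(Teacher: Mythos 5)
Your verification is correct: the expansion $(9x+20y)^2-5(4x+9y)^2=x^2-5y^2$ and the check $2^2-5\cdot 1^2=-1$ are exactly what is needed, and the paper itself leaves Lemma~\ref{lem5} without proof, evidently regarding this routine computation as the argument. Your added remark that the map comes from multiplication by $9+4\sqrt{5}=\bigl(2+\sqrt{5}\bigr)^2$ is precisely the viewpoint the paper uses afterwards in the proof of Lemma~\ref{lem6}, so your proposal matches the intended reasoning.
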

\begin{lemma}\label{lem6}
\begin{sloppypar}
Lemma~\ref{lem5} allows us to compute all positive integer solutions to \mbox{$x^2-5y^2=-1$}.
\end{sloppypar}
\end{lemma}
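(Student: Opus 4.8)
The plan is to combine Lemma~\ref{lem4} with a single algebraic observation: the transformation in Lemma~\ref{lem5} is exactly multiplication by $\left(2+\sqrt{5}\right)^2$ inside the ring $\Z[\sqrt{5}]$. Indeed $\left(2+\sqrt{5}\right)^2=9+4\sqrt{5}$, and for any integers $x,y$ one has $\left(x+y\sqrt{5}\right)\left(9+4\sqrt{5}\right)=(9x+20y)+(4x+9y)\sqrt{5}$. Thus if the pair $(x,y)$ is identified with the element $x+y\sqrt{5}$, then the pair $(9x+20y,\,4x+9y)$ is identified with $\left(x+y\sqrt{5}\right)\left(2+\sqrt{5}\right)^2$. (Lemma~\ref{lem5} itself, which only asserts that this transformation preserves the solution set, is subsumed by this once one knows $\left(2+\sqrt{5}\right)^2$ has norm $1$.)

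Next I would set up the enumeration. By Lemma~\ref{lem4}, the positive integer solutions of $x^2-5y^2=-1$ are precisely the pairs $(x_k,y_k)$ defined by $\left(2+\sqrt{5}\right)^{2k+1}=x_k+y_k\sqrt{5}$ for $k=0,1,2,\ldots$, and $k\mapsto(x_k,y_k)$ is a bijection from $\N$ onto the solution set. The base case $k=0$ gives $x_0+y_0\sqrt{5}=2+\sqrt{5}$, that is $(x_0,y_0)=(2,1)$, the pair named in Lemma~\ref{lem5}. For the inductive step, $\left(2+\sqrt{5}\right)^{2(k+1)+1}=\left(2+\sqrt{5}\right)^{2k+1}\cdot\left(2+\sqrt{5}\right)^2$, so by the observation above $(x_{k+1},y_{k+1})=(9x_k+20y_k,\,4x_k+9y_k)$, which is exactly the transformation supplied by Lemma~\ref{lem5}.

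It follows that the effective procedure ``start from $(2,1)$ and repeatedly apply $(x,y)\mapsto(9x+20y,\,4x+9y)$'' produces, step by step, the full list $(x_0,y_0),(x_1,y_1),(x_2,y_2),\ldots$ of positive integer solutions and nothing else. Since $9x+20y>x$ whenever $x,y$ are positive, the first coordinates strictly increase along the orbit, so for any prescribed bound the complete (finite) list of solutions below that bound is obtained after finitely many iterations; this is the precise sense in which Lemma~\ref{lem5} ``allows us to compute all positive integer solutions''.

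I do not expect a genuine obstacle here. The only point requiring care is the identification $\left(9+4\sqrt{5}\right)=\left(2+\sqrt{5}\right)^2$ together with the fact, read off from Lemma~\ref{lem4}, that the parametrization uses only the odd exponents $2k+1$; this is what makes multiplication by the square stay within the solution set while stepping from the solution indexed by $k$ to the one indexed by $k+1$, hitting every solution exactly once.
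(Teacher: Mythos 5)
Your proposal is correct and follows essentially the same route as the paper's own proof: Lemma~\ref{lem4} parametrizes all positive solutions by odd powers of $2+\sqrt{5}$, and the identity $\left(9+4\sqrt{5}\right)\cdot\left(x+y\sqrt{5}\right)=(9x+20y)+(4x+9y)\sqrt{5}$ with $\left(2+\sqrt{5}\right)^2=9+4\sqrt{5}$ shows the Lemma~\ref{lem5} map steps from the $k$-th to the $(k+1)$-st solution starting at $(2,1)$. The additional remarks about monotonicity and effectiveness are fine but not needed beyond what the paper states.
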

\begin{proof}
It follows from Lemma~\ref{lem4}. Indeed, if $\left(2+\sqrt{5}\right)^{2k+1}=x+y\sqrt{5}$, then
\[
\left(2+\sqrt{5}\right)^{2k+3}=\left(2+\sqrt{5}\right)^2 \cdot \left(2+\sqrt{5}\right)^{2k+1}=
\]
\[
\left(9+4\sqrt{5}\right) \cdot \left(x+y\sqrt{5}\right)=\left(9x+20y\right)+\left(4x+9y\right)\sqrt{5}
\]
\end{proof}
\begin{theorem}\label{the1}
For each integer $n \geq 12$ there exists a system \mbox{$S~\subseteq~E_n$} such that $S$ has infinitely
many integer solutions and they all belong to \mbox{${\Z}^n \setminus [-2^{\textstyle 2^{n-1}},2^{\textstyle 2^{n-1}}]^{n}$}.
\end{theorem}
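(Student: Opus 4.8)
The plan is to realize inside $E_n$ a subsystem whose integer solution set is an infinite family coming from the negative Pell equation $x^2-5y^2=-1$, arranged so that in every solution one coordinate is produced from a Pell solution by roughly $n$ successive squarings and is therefore doubly exponentially large. First I would spend a bounded number of variables on bookkeeping: an equation $x_i=1$ produces a unit, and a short chain of additions and multiplications produces the constants $2,4,5$ (say $2=1+1$, $4=2\cdot 2$, $5=4+1$). Replacing the equations $x_i=1$ by the equations $x_i\cdot x_j=x_j$ in the spirit of the definition of $\widetilde S$ and invoking Lemma~\ref{lem1} lets me keep control of whether $(0,\dots,0)$ is a solution and arrange that it is not. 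Next I would encode $a^2-5b^2=-1$ as a small block of $E_n$-equations in auxiliary variables (square $a$, square $b$, multiply the latter by $5$, and equate $5b^2$ with $a^2$ plus the unit). By Lemmas~\ref{lem4}--\ref{lem6} this block has infinitely many integer solutions $(a,b)$, and it already forces $|a|\ge 2$, since for $a\in\{0,\pm1\}$ the number $a^2+1$ is $1$ or $2$ and hence not a multiple of $5$; this is exactly the foothold needed to start a squaring chain.

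Then I would append a chain of squaring equations $x_k\cdot x_k=x_{k+1}$ (perhaps preceded by one extra multiplication turning $a$ into, e.g., $2a$, so the chain starts from a value of absolute value $\ge 4$ rather than exactly $2$) carrying this value up through every remaining variable. For the bound: if the bottom of the chain has absolute value $\ge 4$, then after $m$ squarings the top of the chain has absolute value at least $4^{2^{m}}=2^{2^{m+1}}$, so if the whole system is arranged so that $m\ge n-1$, every solution has a coordinate strictly larger than $2^{2^{n-1}}$ and therefore lies in $\mathbb Z^n\setminus[-2^{2^{n-1}},2^{2^{n-1}}]^n$. Infinitely many solutions survive because the Pell block supplies infinitely many values of $a$ and the rest of the system is functional over them. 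Here Lemma~\ref{lem3} is the tool guaranteeing that if at some stage one replaces a Pell solution by an odd power of it — a natural way to dodge parity/divisibility obstructions that would otherwise let the minimal Pell solution sneak back toward the boundary — one stays inside the Pell family and does not lose infinitude.

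The hypothesis $n\ge 12$ is exactly where the accounting has to close: the unit/constant block, the Pell block, and the linking equations consume a fixed number of variables, and only the leftover variables feed the squaring chain; since each variable spent elsewhere costs one full squaring, i.e. a whole exponential factor in the exponent of the exponent, one must check that for every $n\ge 12$ the chain is still long enough to clear $2^{2^{n-1}}$ strictly while every unintended solution (anything of small sup norm, and $(0,\dots,0)$ in particular) is ruled out. I expect the genuinely delicate point to be this simultaneous optimization — injecting the Pell pair into the squaring chain without the chain's square‑root structure forcing that pair to be a perfect square (every $x$-coordinate of a solution of $x^2-5y^2=-1$ is $\equiv 2\pmod 4$, hence never a square), which is presumably what forces in the auxiliary multiplier and/or the use of Lemma~\ref{lem3}, and what pins the threshold at $12$ rather than something smaller.
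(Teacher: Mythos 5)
Your overall architecture (constants $1,2,4,5$, a block encoding the negative Pell equation $a^2-5b^2=-1$, then a squaring chain) matches the paper's, but the quantitative heart of the argument is missing, and as written the plan cannot reach the stated bound. You start the chain from a value of absolute value bounded by an absolute constant (you say $\geq 4$, e.g.\ $2a$ with $|a|\geq 2$), and you yourself note that you then need $m\geq n-1$ squarings to clear $2^{\textstyle 2^{n-1}}$. But $m$ squarings consume $m+1$ variables, so $m\geq n-1$ leaves no variables at all for the unit, the constants, the Pell block and the linking equations; conversely, after spending the roughly $11$ variables your bookkeeping requires, the chain has only about $n-12$ squarings, and from a constant-size base this yields only a bound of the shape $2^{\textstyle 2^{n-c}}$ for a fixed $c\approx 11$ --- far short of $2^{\textstyle 2^{n-1}}$. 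No choice of "auxiliary multiplier" of bounded size, and no invocation of Lemma~\ref{lem3} (which only produces larger solutions, not a lower bound on the \emph{minimal} one), closes this gap: the minimal solution $(2,1)$ of $x^2-5y^2=-1$ is tiny, so the minimal solution of your system stays inside the forbidden box.

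The paper's decisive extra idea, which your proposal does not contain, is to make the \emph{least} admissible Pell solution itself doubly exponential while spending only $O(1)$ variables. It builds $625=5^4$ with three multiplications and imposes $x_8=625\cdot x_7$, so the first eleven equations force $x_{10}^2-5x_8^2=-1$ together with $625\mid x_8$, equivalently $x_{10}^2-5^9x_7^2=-1$. In the spirit of Lagarias's result that for $d=5^{2k+1}$ the least solution grows exponentially with $d$, a finite computation shows the least positive solution is $(u(313),v(313)/625)$, and $u(313)^2+1>2^{\textstyle 2^{11}}$. Hence the bottom of the chain, $x_{12}=x_{10}^2+1$, already exceeds $2^{\textstyle 2^{11}}$ when $n=12$, and the invariant $x_{k+1}=x_k^2>2^{\textstyle 2^{k}}$ propagates through the remaining $n-12$ squarings, giving $x_n>2^{\textstyle 2^{n-1}}$; infinitude comes from Lemma~\ref{lem3} applied to $x^2-5^9y^2=-1$. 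Your worry about the chain forcing the Pell coordinate to be a perfect square is a non-issue in this construction, since the chain starts at $x_{10}^2+1$ rather than at the Pell coordinate itself. To repair your write-up you would need to add the divisibility constraint (or some comparable device forcing a doubly exponential minimal solution within a bounded number of equations) and replace the "base $\geq 4$, $m\geq n-1$ squarings" accounting, which is arithmetically impossible, by the induction from a base already exceeding $2^{\textstyle 2^{11}}$.
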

\begin{proof}
By Lemmas~\ref{lem4}--\ref{lem6}, the equation \mbox{$u^2-5v^2=-1$} has infinitely many solutions in positive
integers and all these solutions can be simply computed. For a positive integer~$n$, let \mbox{$(u(n),v(n))$} denote
the \mbox{$n$-th} solution to \mbox{$u^2-5v^2=-1$}. We define $S$ as
\vskip 0.2truecm
\par
\noindent
\centerline{$x_1=1$~~~~~~~~~~$x_1+x_1=x_2$~~~~~~~~~~$x_2+x_2=x_3$~~~~~~~~~~$x_1+x_3=x_4$}
\vskip 0.2truecm
\par
\noindent
\centerline{$x_4 \cdot x_4=x_5$~~~~~~~~~~$x_5 \cdot x_5=x_6$~~~~~~~~~~$x_6 \cdot x_7=x_8$~~~~~~~~~~$x_8 \cdot x_8=x_9$}
\vskip 0.2truecm
\par
\noindent
\centerline{$x_{10} \cdot x_{10}=x_{11}$~~~~~~~~~~$x_{11}+x_1=x_{12}$~~~~~~~~~~$x_4 \cdot x_9=x_{12}$}
\vskip 0.2truecm
\par
\noindent
\centerline{$x_{12} \cdot x_{12}=x_{13}$~~~~~~~~~~$x_{13} \cdot x_{13}=x_{14}$~~~~~~~~~~\ldots~~~~~~~~~~$x_{n-1} \cdot x_{n-1}=x_n$}
\vskip 0.2truecm
\noindent
The first $11$ equations of $S$ equivalently expresses that \mbox{$x_{10}^2-5 \cdot x_8^2=-1$} and 625 divides $x_8$.
The equation {$x_{10}^2-5^9 \cdot x_7^2=-1$} expresses the same fact. Execution of the following {\sl MuPAD} code
\begin{quote}
\begin{verbatim}
x:=2:
y:=1:
for n from 2 to 313 do
u:=9*x+20*y:
v:=4*x+9*y:
if igcd(v,625)=625 then print(n) end_if:
x:=u:
y:=v:
end_for:
float(u^2+1);
float(2^(2^(12-1)));
\end{verbatim}
\end{quote}
\begin{sloppypar}
\noindent
returns only $n=313$. Therefore, in the domain of positive integers, the \mbox{minimal} solution to \mbox{$x_{10}^2-5^9 \cdot x_7^2=-1$}
is given by the pair \mbox{$\left(x_{10}=u(313),~x_7=\frac{\textstyle v(313)}{\textstyle 625}\right)$}.
Hence, if an integer tuple \mbox{$(x_1,\ldots,x_n)$} solves $S$, then \mbox{$|x_8| \geq v(313)$} and
\[
x_{12}=x_{10}^2+1 \geq u(313)^2+1>2^{\textstyle 2^{12-1}}
\]
The final inequality comes from the execution of the last two \mbox{instructions} of the code,
as they display the numbers \mbox{$1.263545677e783$} and \mbox{$3.231700607e616$}.
Applying induction, we get \mbox{$x_n>2^{\textstyle 2^{n-1}}$}. By Lemma~\ref{lem3} (or by \mbox{\cite[p.~58,~Theorem~1.3.6]{Yan})},
the equation \mbox{$x_{10}^2-5^9 \cdot x_7^2=-1$} has infinitely many \mbox{integer} solutions. This conclusion transfers to the \mbox{system~$S$}.
\end{sloppypar}
\end{proof}
\begin{sloppypar}
J.~C.~Lagarias studied the equation \mbox{$x^2-dy^2=-1$} for \mbox{$d=5^{2n+1}$},
where \mbox{$n=0,1,2,3,\ldots$}. His theorem says that for these values \mbox{of~$d$},
the least integer solution grows exponentially \mbox{with~$d$}, \mbox{see \cite[Appendix~A]{Lagarias}}.
\vskip 0.2truecm
\par
The next theorem generalizes Theorem~\ref{the1}. But first we need Lemma~\ref{lem7} together with introductory matter.
\vskip 0.2truecm
\par
Let \mbox{$D(x_1,\ldots,x_p) \in {\Z}[x_1,\ldots,x_p]$}. For the Diophantine equation \mbox{$2 \cdot D(x_1,\ldots,x_p)=0$},
let $M$ denote the maximum of the absolute values of its coefficients.
Let ${\cal T}$ denote the family of all polynomials
$W(x_1,\ldots,x_p) \in {\Z}[x_1,\ldots,x_p]$ whose all coefficients belong to the interval $[-M,M]$
and ${\rm deg}(W,x_i) \leq d_i={\rm deg}(D,x_i)$ for each $i \in \{1,\ldots,p\}$.
Here we consider the degrees of $W(x_1,\ldots,x_p)$ and $D(x_1,\ldots,x_p)$
with respect to the variable~$x_i$. It is easy to check that
\begin{equation}
{\rm card}({\cal T})=(2M+1)^{\textstyle (d_1+1) \cdot \ldots \cdot (d_p+1)}
\end{equation}
\par
We choose any bijection \mbox{$\tau: \{p+1,\ldots,{\rm card}({\cal T})\} \longrightarrow {\cal T} \setminus \{x_1,\ldots,x_p\}$}.
Let ${\cal H}$ denote the family of all equations of the form
\vskip 0.2truecm
\noindent
\centerline{$x_i=1$, $x_i+x_j=x_k$, $x_i \cdot x_j=x_k$~~($i,j,k \in \{1,\ldots,{\rm card}({\cal T})\})$}
\vskip 0.2truecm
\noindent
which are polynomial identities in \mbox{${\Z}[x_1,\ldots,x_p]$} if
\[
\forall s \in \{p+1,\ldots,{\rm card}({\cal T})\} ~~x_s=\tau(s)
\]
There is a unique \mbox{$q \in \{p+1,\ldots,{\rm card}({\cal T})\}$} such that \mbox{$\tau(q)=2 \cdot D(x_1,\ldots,x_p)$}.
For each ring $\K$ extending $\Z$ the system ${\cal H}$ implies \mbox{$2 \cdot D(x_1,\ldots,x_p)=x_q$}.
To see this, we observe that there exist pairwise distinct
\mbox{$t_0,\ldots,t_m \in {\cal T}$} such that $m>p$ and
\[
t_0=1~ \wedge ~t_1=x_1~ \wedge ~\ldots~ \wedge ~t_p=x_p~ \wedge ~t_m=2 \cdot D(x_1,\ldots,x_p)~ \wedge
\]
\[
\forall i \in \{p+1,\ldots,m\}~ \exists j,k \in \{0,\ldots,i-1\} ~~(t_j+t_k=t_i \vee t_i+t_k=t_j \vee t_j \cdot t_k=t_i)
\]
For each ring $\K$ extending $\Z$ and for each \mbox{$x_1,\ldots,x_p \in \K$}
there exists a unique tuple \mbox{($x_{p+1},\ldots,x_{{\rm card}({\cal T})}) \in \K^{{\rm card}({\cal T})-p}$}
such that the tuple \mbox{$(x_1,\ldots,x_p,x_{p+1},\ldots,x_{{\rm card}({\cal T})})$}
solves the system \mbox{${\cal H}$}. The sought elements \mbox{$x_{p+1},\ldots,x_{{\rm card}({\cal T})}$}
are given by the formula
\[
\forall s \in \{p+1,\ldots,{\rm card}({\cal T})\} ~~x_s=\tau(s)(x_1,\ldots,x_p)
\]
\begin{lemma}\label{lem7}
The system ${\cal H} \cup \{x_q+x_q=x_q\}$ can be simply computed.
For each ring $\K$ extending $\Z$, the equation $D(x_1,\ldots,x_p)=0$
is equivalent to the system ${\cal H} \cup \{x_q+x_q=x_q\} \subseteq E_{{\rm card}({\cal T})}$.
Formally, this equivalence can be written as
\[
\forall x_1,\ldots,x_p \in \K ~\Bigl(D(x_1,\ldots,x_p)=0 \Longleftrightarrow
\exists x_{p+1},\ldots,x_{{\rm card}({\cal T})} \in \K
\]
\[
(x_1,\ldots,x_p,x_{p+1},\ldots,x_{{\rm card}({\cal T})}) {\rm ~solves~the~system~}
{\cal H} \cup \{x_q+x_q=x_q\} \Bigr)
\]
For each ring $\K$ extending $\Z$ and for each \mbox{$x_1,\ldots,x_p \in \K$} with
\mbox{$D(x_1,\ldots,x_p)=0$} there exists a unique tuple
\mbox{($x_{p+1},\ldots,x_{{\rm card}({\cal T})}) \in \K^{{\rm card}({\cal T})-p}$} such
that the tuple \mbox{$(x_1,\ldots,x_p,x_{p+1},\ldots,x_{{\rm card}({\cal T})})$} solves the system
\mbox{${\cal H} \cup \{x_q+x_q=x_q\}$}. Hence, for each ring $\K$ extending $\Z$ the equation
\mbox{$D(x_1,\ldots,x_p)=0$} has the same number of solutions as the system \mbox{${\cal H} \cup \{x_q+x_q=x_q\}$}.
\end{lemma}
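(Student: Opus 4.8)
The plan is to lean on the construction already set up in the paragraphs preceding the lemma, which carries all the substantive content; what then remains for Lemma~\ref{lem7} splits into three routine pieces, which I would treat in this order.

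First, computability: the family ${\cal T}$ is finite and can be listed effectively from $D$, since one simply runs through the monomials $x_1^{a_1}\cdots x_p^{a_p}$ with $a_i\le d_i$ and through all coefficient tuples from $[-M,M]$. I would fix $\tau$ to be the enumeration of ${\cal T}\setminus\{x_1,\ldots,x_p\}$ in some canonical order, say lexicographic. For every triple $(i,j,k)$ one decides by ordinary polynomial arithmetic in ${\Z}[x_1,\ldots,x_p]$ whether $x_i=1$, $x_i+x_j=x_k$, or $x_i\cdot x_j=x_k$ becomes an identity after the substitution $x_s=\tau(s)$ for $p<s\le{\rm card}({\cal T})$; collecting the identities produces ${\cal H}$. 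The index $q$ is located by searching the list for the polynomial $2\cdot D(x_1,\ldots,x_p)$, and adjoining $x_q+x_q=x_q$, which already lies in $E_{{\rm card}({\cal T})}$, is immediate. Hence ${\cal H}\cup\{x_q+x_q=x_q\}$ is computable.

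Second, the equivalence: fix a ring $\K$ extending $\Z$ and elements $x_1,\ldots,x_p\in\K$. If $D(x_1,\ldots,x_p)=0$, I would set $x_s=\tau(s)(x_1,\ldots,x_p)$ for $p<s\le{\rm card}({\cal T})$; by the paragraph before the lemma this tuple solves ${\cal H}$, and in particular $x_q=2\cdot D(x_1,\ldots,x_p)=0$, so $x_q+x_q=0=x_q$ and the extended tuple solves ${\cal H}\cup\{x_q+x_q=x_q\}$. Conversely, if $(x_1,\ldots,x_{{\rm card}({\cal T})})$ solves ${\cal H}\cup\{x_q+x_q=x_q\}$, then ${\cal H}$ forces $2\cdot D(x_1,\ldots,x_p)=x_q$, while $x_q+x_q=x_q$ forces $x_q=0$ in the additive group of $\K$; combining, $2\cdot D(x_1,\ldots,x_p)=0$ and hence $D(x_1,\ldots,x_p)=0$.

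Third, uniqueness together with the solution count: I would package these by noting that the map
\[
(x_1,\ldots,x_p)\longmapsto\bigl(x_1,\ldots,x_p,\tau(p+1)(x_1,\ldots,x_p),\ldots,\tau({\rm card}({\cal T}))(x_1,\ldots,x_p)\bigr)
\]
is, by the two implications just proved and the uniqueness of the solution of ${\cal H}$ recorded before the lemma, a bijection from the set of solutions of $D(x_1,\ldots,x_p)=0$ in $\K^p$ onto the set of solutions of ${\cal H}\cup\{x_q+x_q=x_q\}$ in $\K^{{\rm card}({\cal T})}$, with inverse the projection onto the first $p$ coordinates. This yields at once both the uniqueness of the completion of a zero of $D$ to a solution of the system and the equality of the two solution counts. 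The lemma is essentially a relabeling exercise, so I do not expect a genuine obstacle; the one spot that wants a word of care is the implication $2\cdot D(x_1,\ldots,x_p)=0\Rightarrow D(x_1,\ldots,x_p)=0$, which is exactly where the hypothesis that $\K$ extends $\Z$ is used (so that $2$ is not a zero divisor — as in $\Z$, $\mathbb{Q}$, $\R$, $\C$), and it is also the reason the construction is carried out with $2\cdot D$ rather than $D$: the factor $2$ guarantees that this polynomial is never one of $x_1,\ldots,x_p$, so that $q$ is a genuinely new index.
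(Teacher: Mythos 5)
Your overall structure --- computability of ${\cal H}\cup\{x_q+x_q=x_q\}$, the two implications, and the bijection that simultaneously gives the uniqueness of the completion and the equality of the two solution counts --- is exactly the route the paper intends: the paper offers no separate proof of Lemma~\ref{lem7} beyond the discussion preceding it (the chain $t_0,\ldots,t_m$ showing that ${\cal H}$ forces $x_q=2\cdot D(x_1,\ldots,x_p)$, and the uniqueness of the solution of ${\cal H}$ over given $x_1,\ldots,x_p$), and your write-up fills in the same details, including the correct explanation of why the factor $2$ guarantees $2\cdot D\in{\cal T}\setminus\{x_1,\ldots,x_p\}$ and hence $q>p$.

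The one step whose justification does not hold as you state it is the passage from $2\cdot D(x_1,\ldots,x_p)=0$ to $D(x_1,\ldots,x_p)=0$. You claim that in every ring $\K$ extending $\Z$ the element $2$ is not a zero divisor; that is false. For example, $\Z\times(\Z/2\Z)$ contains an isomorphic copy of $\Z$ via $n\mapsto(n,\,n\bmod 2)$, which sends $1$ to the identity, yet $2\cdot(0,1)=(0,0)$; similarly $\Z[y]/(2y)$ extends $\Z$ and has $2$-torsion. In such a ring the system ${\cal H}\cup\{x_q+x_q=x_q\}$ yields only $2\cdot D=0$: taking $D=x_1$ and $x_1=(0,1)$ one obtains a solution of the system whose first coordinate is not a zero of $D$, so the equivalence genuinely requires $\K$ to have no $2$-torsion (equivalently, that $2$ be cancellable). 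This defect is inherited from the lemma's hypothesis rather than introduced by you --- the paper's implicit argument has the same issue, and it is harmless in the paper's applications, where $\K$ is $\Z$ (or $\mathbb{Q}$, $\R$, $\C$) --- but your parenthetical ``as in $\Z$, $\mathbb{Q}$, $\R$, $\C$'' should be promoted to an explicit assumption; alternatively, for an arbitrary ring extending $\Z$ one should either state the equivalence with $2\cdot D=0$, or use the paper's subsequent variant (with $M$ replaced by $M/2$ and $x_q$ corresponding to $D$ itself, allowing $q\leq p$), for which $x_q+x_q=x_q$ gives $D=0$ directly and no cancellation by $2$ is needed.
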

\end{sloppypar}
\par
Putting $M=M/2$ we obtain new families ${\cal T}$ and ${\cal H}$.
There is a unique $q \in \{1,\ldots,{\rm card}({\cal T})\}$ such that
\[
\Bigl(q \in \{1,\ldots,p\}~ \wedge ~x_q=D(x_1,\ldots,x_p)\Bigr)~ \vee
\]
\[
\Bigl(q \in \{p+1,\ldots,{\rm card}({\cal T})\}~ \wedge ~\tau(q)=D(x_1,\ldots,x_p)\Bigr)
\]
The new system \mbox{${\cal H} \cup \{x_q+x_q=x_q\}$} is equivalent to \mbox{$D(x_1,\ldots,x_p)=0$}
and can be simply computed.
\newpage
The Davis-Putnam-Robinson-Matiyasevich theorem states that every recursively enumerable
set \mbox{${\cal M} \subseteq {\N}^n$} has a Diophantine representation, that is
\[
(a_1,\ldots,a_n) \in {\cal M} \Longleftrightarrow
\exists x_1, \ldots, x_m \in \N ~~W(a_1,\ldots,a_n,x_1,\ldots,x_m)=0
\]
\par
\noindent
for some polynomial $W$ with integer coefficients, see \cite{Matiyasevich} and \cite{Kuijer}.
The polynomial~$W$ can be computed, if we know a Turing machine~$M$
such that, for all \mbox{$(a_1,\ldots,a_n) \in {\N}^n$}, $M$ halts on \mbox{$(a_1,\ldots,a_n)$} if and only
if \mbox{$(a_1,\ldots,a_n) \in {\cal M}$}, see \cite{Matiyasevich} and \cite{Kuijer}.
\begin{theorem}\label{the2}
There is an algorithm that for every computable function
\mbox{$f:\N \to \N$} returns a positive integer $m(f)$, for which a second algorithm accepts
on the \mbox{input $f$} and any integer \mbox{$n \geq m(f)$}, and returns a system \mbox{$S \subseteq E_n$} such that
$S$ has infinitely many integer solutions and each integer tuple $(x_1,\ldots,x_n)$ that solves $S$ satisfies $x_1=f(n)$.
\end{theorem}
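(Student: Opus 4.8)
The plan is to combine the construction of Theorem~\ref{the1} with the Davis--Putnam--Robinson--Matiyasevich theorem and the reduction of Lemma~\ref{lem7}, so that the ``very large solutions'' phenomenon becomes a way of \emph{encoding} an arbitrary computable function. The key idea is that for a computable $f$ we can effectively produce a polynomial Diophantine equation whose only solutions have a distinguished variable equal to $f$ of something, and then stack the $E_n$-construction on top of it.

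First I would fix a computable $f:\N\to\N$ and consider the recursively enumerable set $\{(n,a):a=f(n)\}\subseteq\N^2$ (computability of $f$ makes this r.e., indeed recursive). By the Davis--Putnam--Robinson--Matiyasevich theorem, applied effectively to a Turing machine computing $f$, we obtain a polynomial $W(n,a,z_1,\ldots,z_r)\in\Z[n,a,z_1,\ldots,z_r]$ with
\[
a=f(n)\Longleftrightarrow \exists z_1,\ldots,z_r\in\N\ \ W(n,a,z_1,\ldots,z_r)=0.
\]
The algorithm producing $m(f)$ is where I would spend care: running Lemma~\ref{lem7} on the equation $W=0$ (treating $n$ and $a$ among the variables $x_1,\ldots,x_p$, with $a$ playing the role of the distinguished $x_1$) yields a system $\mathcal H\cup\{x_q+x_q=x_q\}\subseteq E_N$ with $N=\mathrm{card}(\mathcal T)$ a number computable from $f$ alone; together with a fixed number of extra variables needed below to (i) force $n$ to equal the ambient dimension, (ii) restrict the existential witnesses to \emph{non-negative} integers via the four-squares trick $z_i=s_{i,1}^2+s_{i,2}^2+s_{i,3}^2+s_{i,4}^2$, and (iii) leave room for a Theorem~\ref{the1}-style Pell gadget, this gives the threshold $m(f)$. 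The second algorithm, on input $f$ and $n\ge m(f)$, pads the system to exactly $n$ variables using the chain $x_i\cdot x_i=x_{i+1}$ of squarings as in Theorem~\ref{the1} (which both consumes surplus variables and, crucially, forces every solution to be astronomically large, hence in particular non-trivial), and inserts the equation block that pins the ``$n$''-variable of $W$ to the literal value $n$ by a short additive/multiplicative construction of the constant $n$ from $x_i=1$.

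The three things I would verify in order are: (1) \emph{Correctness of the value:} any integer tuple solving $S$ has its designated coordinate $x_1$ equal to $f(n)$, because the sub-system coming from Lemma~\ref{lem7} is equivalent over $\Z$ to $W=0$ with the sign constraints, and $W=0$ with $n$ fixed and witnesses in $\N$ holds precisely when $a=f(n)$; one must check that the squaring chain and the constant-$n$ gadget do not introduce spurious solutions, which follows because squaring chains have unique forward-determined values and $0,\pm1$ are excluded by the Pell gadget. (2) \emph{Infinitely many solutions:} the Pell equation $x^2-5^9y^2=-1$ embedded via Theorem~\ref{the1}'s mechanism supplies infinitely many integer solutions, and these are compatible with the fixed finite data $(n,f(n),\text{witnesses})$ because the Pell variables are disjoint from, and unconstrained by, the $W$-block. (3) \emph{Uniformity and effectivity:} every step---the DPRM polynomial, the cardinality $\mathrm{card}(\mathcal T)=(2M+1)^{(d_1+1)\cdots(d_p+1)}$, the bijection $\tau$, the padding---is explicitly computable, and $m(f)$ depends only on $f$, not on $n$.

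The main obstacle I anticipate is bookkeeping the dimension: one must show that a \emph{single} threshold $m(f)$ works for \emph{all} $n\ge m(f)$, i.e. that the padding mechanism can absorb any number of extra variables without changing the solution set's first coordinate and without destroying infinitude. The squaring chain $x_{j}\cdot x_{j}=x_{j+1}$ does exactly this---it extends a solution uniquely and forces growth---but I need the Pell gadget's output variable to already exceed $1$ so the chain genuinely lengthens (never collapsing to $0$ or $1$), and I need the ``constant $n$'' sub-gadget to have a length that grows slowly enough (logarithmically, via repeated doubling) to fit inside the budget for every $n\ge m(f)$. Verifying that these interlocking size constraints are simultaneously satisfiable is the crux; once that is pinned down, the theorem follows by assembling the pieces.
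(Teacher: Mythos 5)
Your skeleton is the paper's: a DPRM representation of $x_1=f(x_2)$, Lagrange's four\hyphenation{four-squares}-squares trick to pass from $\N$ to $\Z$, Lemma~\ref{lem7} to convert the resulting single equation into a fixed conjunction $\Psi(x_1,\ldots,x_s)$ of $E_s$-type equations, a gadget forcing $x_2=n$, padding to exactly $n$ variables, and a source of infinitude. Where you differ is in the auxiliary gadgets, and in each case the paper's choice is lighter and dissolves what you call the crux. For infinitude the paper never touches Pell equations: it simply includes $u+u=v$ with $u$ unconstrained, which already yields infinitely many solutions without affecting $x_1$; the Pell machinery belongs to Theorem~\ref{the1}, where largeness of solutions is the point, whereas in Theorem~\ref{the2} largeness is irrelevant, so your worry about the squaring chain ``collapsing to $0$ or $1$'' is moot (a squaring chain consumes variables and extends solutions uniquely whatever its initial value). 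For the constant $n$ the paper uses a unary chain $t_1=1$, $t_i+t_1=t_{i+1}$ up to $t_{[n/2]}$, then $t_{[n/2]}+t_{[n/2]}=w$ and $w+y=x_2$ with $y=0$ or $y=1$ according to the parity of $n$; because this gadget's length grows linearly with $n$, it itself does almost all the padding, and only $n-[n/2]-4-s$ equations $z_i=1$ are needed to absorb the remainder, so the single threshold $m(f)=8+2s$ works for every $n\ge m(f)$ with trivial bookkeeping --- exactly the interlocking-size problem that your logarithmic constant gadget plus squaring-chain padding obliges you to argue separately (your version can be made to work, but that extra argument, including computability of your $m(f)$ from the $O(\log n)$ gadget length, must actually be written out). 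Two details in your sketch need repair: the four-squares restriction must be imposed not only on the existential witnesses but also on $x_1$ and $x_2$, since the DPRM equivalence says nothing about negative values of the parameter $a$, and without $x_1=a^2+b^2+c^2+d^2$ a spurious solution with $x_1<0$ is not excluded; and $E_n$ contains no equations of the form $x_i=x_j$ or $x_i=c$, so the constant-$n$ gadget cannot ``pin'' the argument variable after the fact --- its final operation must output directly into $x_2$, as the paper's equation $w+y=x_2$ does.
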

\begin{proof}
\begin{sloppypar}
\noindent
By the Davis-Putnam-Robinson-Matiyasevich theorem, the function $f$ has a Diophantine representation.
It means that there is a polynomial $W(x_1,x_2,x_3,\ldots,x_r)$ with integer coefficients
such that for each non-negative integers $x_1$, $x_2$,
\begin{equation}
\tag*{\tt (E1)}
x_1=f(x_2) \Longleftrightarrow \exists x_3, \ldots, x_r \in \N ~~W(x_1,x_2,x_3,\ldots,x_r)=0
\end{equation}
\end{sloppypar}
\noindent
By the equivalence~{\rm (E1)} and Lagrange's four-square theorem, for each integers $x_1$, $x_2$,
the conjunction \mbox{$(x_2 \geq 0) \wedge (x_1=f(x_2))$} holds true if and only if there exist integers
$a,b,c,d,\alpha,\beta,\gamma,\delta,x_3,x_{3,1},x_{3,2},x_{3,3},x_{3,4},\ldots,x_r,x_{r,1},x_{r,2},x_{r,3},x_{r,4}$
such that
\[
W^2(x_1,x_2,x_3,\ldots,x_r)+\bigl(x_1-a^2-b^2-c^2-d^2\bigr)^2+\bigl(x_2-\alpha^2-\beta^2-\gamma^2-\delta^2\bigr)^2+
\]
\[
\bigl(x_3-x^2_{3,1}-x^2_{3,2}-x^2_{3,3}-x^2_{3,4}\bigr)^2+\ldots+\bigl(x_r-x^2_{r,1}-x^2_{r,2}-x^2_{r,3}-x^2_{r,4}\bigr)^2=0
\]
By Lemma~\ref{lem7}, there is an integer \mbox{$s \geq 3$} such that for each integers $x_1$, $x_2$,
\begin{equation}
\tag*{\tt (E2)}
\Bigl(x_2 \geq 0 \wedge x_1=f(x_2)\Bigr) \Longleftrightarrow \exists x_3,\ldots,x_s \in \Z ~~\Psi(x_1,x_2,x_3,\ldots,x_s)
\end{equation}
where the formula $\Psi(x_1,x_2,x_3,\ldots,x_s)$ is algorithmically determined as a conjunction of formulae of the form
\mbox{$x_i=1$}, \mbox{$x_i+x_j=x_k$}, \mbox{$x_i \cdot x_j=x_k$} \mbox{($i,j,k \in \{1,\ldots,s\})$}.
Let $m(f)=8+2s$, and let $[\cdot]$ denote the integer part function. For each integer \mbox{$n \geq m(f)$},
\[
n-\left[\frac{n}{2}\right]-4-s \geq m(f)-\left[\frac{m(f)}{2}\right]-4-s \geq m(f)-\frac{m(f)}{2}-4-s=0
\]
Let $S$ denote the following system
\[\left\{
\begin{array}{rcl}
{\rm all~equations~occurring~in~}\Psi(x_1,x_2,\ldots,x_s) \\
n-\left[\frac{n}{2}\right]-4-s {\rm ~equations~of~the~form~} z_i=1 \\
t_1 &=& 1 \\
t_1+t_1 &=& t_2 \\
t_2+t_1 &=& t_3 \\
&\ldots& \\
t_{\left[\frac{n}{2}\right]-1}+t_1 &=& t_{\left[\frac{n}{2}\right]} \\
t_{\left[\frac{n}{2}\right]}+t_{\left[\frac{n}{2}\right]} &=& w \\
w+y &=& x_2 \\
y+y &=& y {\rm ~(if~}n{\rm ~is~even)} \\
y &=& 1 {\rm ~(if~}n{\rm ~is~odd)} \\
u+u &=& v
\end{array}
\right.\]
with $n$ variables. By the equivalence~{\tt (E2)}, the system~$S$ is consistent over $\Z$.
The equation \mbox{$u+u=v$} guarantees that $S$ has infinitely many integer solutions.
If an integer $n$-tuple $(x_1,x_2,\ldots,x_s,\ldots,w,y,u,v)$ solves~$S$,
then by the equivalence~{\tt (E2)},
\[
x_1=f(x_2)=f(w+y)=f\left(2 \cdot \left[\frac{n}{2}\right]+y\right)=f(n)
\]
\end{proof}
\vskip 0.2truecm
\noindent
{\bf Corollary.} {\em There is an algorithm that for every computable function
\mbox{$f:\N \to \N$} returns a positive integer $m(f)$, for which a second algorithm accepts
on the \mbox{input $f$} and any integer \mbox{$n \geq m(f)$}, and returns an integer tuple \mbox{$(x_1,\ldots,x_n)$}
for which $x_1=f(n)$ and
\vskip 0.2truecm
\par
\noindent
~~~~~(4)~~for each integers $y_1,\ldots,y_n$ the conjunction
\[
\Bigl(\forall i \in \{1,\ldots,n\}~(x_i=1 \Longrightarrow y_i=1)\Bigr) ~\wedge
\]
\[
\Bigl(\forall i,j,k \in \{1,\ldots,n\}~(x_i+x_j=x_k \Longrightarrow y_i+y_j=y_k)\Bigr) ~\wedge
\]
\[
\forall i,j,k \in \{1,\ldots,n\}~(x_i \cdot x_j=x_k \Longrightarrow y_i \cdot y_j=y_k)
\]
\par
\noindent
~~~~~~~~~~~~implies that $x_1=y_1$.}
\vskip 0.2truecm
\begin{sloppypar}
\noindent
{\em Proof.} Let \mbox{$\leq_n$} denote the order on \mbox{${\Z}^n$} which ranks the tuples \mbox{$(x_1,\ldots,x_n)$}
first \mbox{according} to \mbox{${\rm max}(|x_1|,\ldots,|x_n|)$} and then lexicographically.
The ordered set \mbox{$({\Z}^n,\leq_n)$} is isomorphic to \mbox{$(\N,\leq)$}. To find an integer tuple
\mbox{$(x_1,\ldots,x_n)$}, we solve the system~$S$ by performing the brute-force search in the order~$\leq_n$.
\end{sloppypar}
\noindent
\rightline{$\Box$}
\vskip 0.2truecm
\par
If $n \geq 2$, then the tuple
\[
\left(x_1,\ldots,x_n\right)=\left(2^{\textstyle 2^{n-2}},2^{\textstyle 2^{n-3}},\ldots,256,16,4,2,1\right)
\]
has property {\em (4)}. Unfortunately, we do not know any explicitly given integers \mbox{$x_1,\ldots,x_n$}
with property {\em (4)} and \mbox{$|x_1|>2^{\textstyle 2^{n-2}}$}.

\noindent
Apoloniusz Tyszka\\
Technical Faculty\\
Hugo Ko\l{}\l{}\k{a}taj University\\
Balicka 116B, 30-149 Krak\'ow, Poland\\
E-mail address: \url{rttyszka@cyf-kr.edu.pl}
\end{document}